\documentclass{amsart} 
\usepackage{amssymb}


\newcommand\cle\preceq
\newcommand\cwedge\curlywedge
\newcommand\cvee\curlyvee

\newcommand\colo{\colon\,}
\newcommand\ran{\mathop{\mathrm{ran}}}
\newcommand{\stminus}
{\mathrel{^*\hspace{-.64em}-}}
\newcommand\thitem[1]{\item[\hspace*{-3ex}{\upshape (#1)}]}

\theoremstyle{plain}
\newtheorem{theorem}{Theorem}[section]
\newtheorem{corollary}[theorem]{Corollary} 
\newtheorem{lemma}[theorem]{Lemma}
\newtheorem{proposition}[theorem]{Proposition}
\theoremstyle{definition}
\newtheorem{remark}{Remark}
\newtheorem{example}{Example}


%
\author[J.\, C\={\i}rulis]{J\=anis C\={i}rulis}
\address{Faculty of Computing \\ University of Latvia \\ Latvia}
\email{jc@lanet.lv}
\title[Lattice operations on Rickart *-rings]{Lattice operations on Rickart *-rings \\ under the star order}
\subjclass{06A06; 16W10; 47A05; 47L30} 
\keywords{involution ring, linear operator, lattice operations, Rickart *-ring, star order} 

\begin{document}
\begin{abstract}
Various authors have investigated properties of the star order (introduced by M.P.\ Drazin in 1978) on algebras of matrices and of bounded linear operators on a Hilbert space. Rickart involution rings (*-rings) are a certain algebraic analogue of von Neumann algebras, which cover these particular algebras. In 1983, M.F.~Janowitz proved, in particular, that, in a star-ordered Rickart *-ring, every pair of elements bounded from above has a meet and also a join. However, the latter conclusion seems to be based on some wrong assumption. We show that the conclusion is nevertheless correct, and provide equational descriptions of joins and meets for this case. We also present various general properties of the star order in Rickart *-rings, give several necessary and sufficient conditions (again, equational) for a pair of elements to have a least upper bound of a special kind, and discuss the question when a star-ordered Rickart*-ring is a lower semilattice. 
\end{abstract}

\maketitle

\section{Introduction}

In \cite{D}, M.P.~Drazin proved that a certain relation on the so called proper involution rings (in fact, even involution semigroups) is an order, now known as the star order (or *-order, for short).  Properties of star order have most intensively been studied in the space of complex matrices and the ring of all bounded linear operators on an infinitely-dimensional Hilbert space. In particular, a number of alternative characteristics of the star order for these structures are found in the literature. Also, existence of joins and meets in various particular cases have attracted interest. Thus, in the early papers \cite{H,HD}, it was proved that *-regular involution rings have the upper bound property, resp., complex $m \times n$ matrices form a lower semilattice under the star ordering. (A poset is said to have the \emph{upper bound property} if every pair of its elements bounded from above has the least upper bound.) Later, S.~Gudder introduced in  \cite{G} a certain order for self-adjoint bounded operators on a complex Hilbert space; it is actually a particular star order and has been intensively investigated. Gudder noticed that every initial segment in the poset of such operators is a lattice and, hence, has the upper bound property (but see below a comment on this latter conclusion). S.~Pulmannov\'a and E.~Vincekov\'a \cite{PV} improved his result by showing that the poset of operators is \emph{bounded complete}, i.e. that every subset bounded from above has the supremum, and every nonempty subset has the infimum; see \cite{C} for another approach to this result. Existence conditions for joins and meets  of self-adjoint operators, and representations of these operations have been discussed in the physical literature (see references in \cite{C}. For related questions in certain abstract structures, see \cite{{LX}}.).  The star-ordered set of all bounded linear operators on a Hilbert space also has the upper bound property: this was shown by X.-M. Xu e.a.\ in \cite{XDKL}; a matrix representation for  joins of operators was also given there. Recently J.Antezana e.a.\ have extended the result of \cite{HD}: it was proved in \cite{ACMS} that any two bounded linear operators on a Hilbert space have the meet. Joins and meets of matrices under the *-order are discussed in \cite{MBM}.

These, and several other well-known structures (cf.\ \cite{B,Bo}) are examples of star-ordered Rickart *-rings and even Baer *-rings, or their Hermitian parts (see Section \ref{rick} for definitions). In \cite{J2}, M.F.~Janowitz proved that in  a Baer *-ring $R$ every nonempty subset has an infimum. It is an immediate consequence that then every subset bounded from above has a supremum. In particular, $R$ is a lower semilattice in which every initial segment is a lattice; such ordered structures are known as \emph{nearlattices}. These relatively old results in fact cover those  mentioned above. 
 
A sub-*-ring of a Baer *-ring is not necessary a Baer *-ring. Janowitz proved in \cite{J2} also that every initial segment of an arbitrary star-ordered  Rickart *-ring $R$ is an orthomodular lattice, and drew an immediate consequence that such a ring has the upper bound property. However, this conclusion seems to be based on a wrong assumption that the upper bound of two elements in an initial segment of a poset is their least upper bound in the whole poset (curiously, the same inaccuracy has been admitted in \cite{G}; see \cite{C})); we show below that it fortunately is correct. Another result of \cite{J2} is that $R$ is a lower semilattice if it has no nonzero nilpotent elements.  

The present paper is organized as follows. Section \ref{rick} contains necessary definitions, two examples, and some elementary properties of Rickart *-rings. Preliminary results on star ordering are presented in Section \ref{star}. In particular, we give there several alternative characterizations of the star order in a Rickart *-ring. Main results are concentrated in Section \ref{main}. We first give a simple proof of the result of \cite{J2} that bounded pairs of elements have both the join and the meet; the proof provides also an equational description of these. Then we discuss existence of joins and meets of some special bounded pairs of elements. At last, in Section \ref{meets} we turn to meets of arbitrary pairs of elements of a Rickart *-ring.

\section{Rickart *-rings} \label{rick}

An (associative)  involution ring ({*-ring}, for short)
$R$ is a \emph{Rickart *-ring} if the right annihilator of every element is generated by a projection, i.e., an idempotent and self-adjoint element. Thus, to each $x \in R$ there is a projection $x'$ such that,
\[
\mbox{for all $y \in R$, $xy = 0$ if and only if $y = x'y$};
\]
this $x'$ is necessarily unique. In particular, $R$ has the unit $1 := 0'$. The element $x''$ is sometimes called the \emph{right projection of $x$}. If the right annihilator of  any subset of $R$ is generated by a projection, the ring is called a \emph{Baer *-ring}.
(We follow the notation of \cite{F1,F2,J1,J2}; many authors write $x'$ for the commutant of $x$, and RP(x), for the right projection of $x$. See, for example, \cite{B}, which is the standard reference book on Rickart and Baer rings and *-rings.) 

Through the paper, let $R$ be a fixed Rickart *-ring. 
Its subset $P$ of \emph{projections}, i.e., idempotent and symmetric elements, is partially ordered by the relation $\le$ defined by $e \le f$ iff $ef = e$ or, equivalently, $fe = e$; $0$ is its least, and 1, the greatest element. Projections form an orthomodular lattice, where $e'$ is the orthocomplement of $e$ \cite{B,F1,F2,J1}. It is easily seen that $e'= 1 - e$. 
We let the usual symbols $\vee$ and $\wedge$ stand for the lattice operations in $P$. For example, $ef \in P$ if and only if $e$ and $f$ commute, and then $e \wedge f = ef$; likewise, $e + f \in P$ if and only if $ef = 0$, and then $e \vee f = e + f$. Notice that every initial segment $[0,g]$ of $P$ is a sublattice, which also is orthomodular with $g \wedge e'$ ($= g - e$) the orthocomplement of $e$. $R$ is a Baer *-ring if and only if its lattice of projections is complete \cite{B}.

\begin{lemma}	\label{C-latt}
For every $x \in R$, the subset $C(x) := \{e \in P\colo ex = xe\}$ is a subortholattice of $P$. It is closed also under all infinite joins and meets existing in $P$. 
\end{lemma}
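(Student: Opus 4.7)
The plan is to check the defining properties of a subortholattice one by one. That $0,1\in C(x)$ is trivial, and closure under orthocomplementation follows from $(1-e)x = x - ex = x - xe = x(1-e)$. I would handle joins and meets uniformly by proving closure under \emph{arbitrary} meets existing in $P$; arbitrary joins then reduce to this via the De Morgan identity $\bigvee_i e_i = 1 - \bigwedge_i (1-e_i)$, valid in the orthomodular lattice $P$.

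For the core step, let $\{e_i\}_{i\in I}\subseteq C(x)$ and suppose $e:=\bigwedge_i e_i$ exists in $P$. Because $e \le e_i$ one has $e_i e = e$, so for each $i$
$$
e_i\cdot xe = (e_i x)\,e = (x e_i)\,e = x(e_i e) = xe,
$$
that is, $(1-e_i)\,xe = 0$. Taking adjoints gives $ex^*(1-e_i) = 0$, whence by the Rickart defining property $(1-e_i) = (ex^*)'(1-e_i)$, equivalently $(ex^*)''(1-e_i) = 0$, i.e.\ $(ex^*)''\le e_i$. Since this holds for every $i$, $(ex^*)'' \le e$. Combined with the identity $ex^* = ex^*(ex^*)''$, this yields $ex^*(1-e) = 0$, and one more adjoint gives $(1-e)\,xe = 0$. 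The symmetric argument starting from $e_i x^* = x^* e_i$ (valid because $e_i$ is self-adjoint) produces $ex(1-e) = 0$. Hence $ex = exe = xe$, so $e\in C(x)$.

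The main obstacle is the bridging move above: translating the left-absorption identity $(1-e_i)\,xe = 0$ via a $*$ flip into a statement about the right projection $(ex^*)''$, which is precisely the kind of object controlled by the Rickart hypothesis and, through taking the meet, by the lattice structure of $P$. Once that step is in hand the rest is bookkeeping, and the infinite part of the claim falls out alongside the finite one with no extra work.
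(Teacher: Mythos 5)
Your proof is correct, but it takes a genuinely different route from the paper's: the paper disposes of the lemma in two lines by citing Foulis --- Theorem 1(ix) of \cite{F2} for closure under orthocomplementation and Theorem 2(iv) of \cite{F2} for closure under arbitrary joins (meets then following by De Morgan) --- whereas you give a complete self-contained verification from the Rickart axioms. Your central chain is sound: from $e\le e_i$ and $e_ix=xe_i$ you get $(1-e_i)xe=0$, an adjoint turns this into $ex^*(1-e_i)=0$, the annihilator property converts that into $(ex^*)''\le e_i$ for every $i$, hence $(ex^*)''\le e=\bigwedge_ie_i$, and the identity $aa''=a$ then recovers $ex^*(1-e)=0$, i.e.\ $(1-e)xe=0$; the symmetric run with $x^*$ (legitimate, since $e_ix=xe_i$ is equivalent to $e_ix^*=x^*e_i$ by self-adjointness of $e_i$) gives $ex(1-e)=0$, so $ex=exe=xe$. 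Your reduction of infinite joins to infinite meets of orthocomplements is the mirror image of the paper's reduction of meets to joins, and finite meets are covered since $P$ is a lattice, so the subortholattice claim is fully established. What your version buys is independence from the external reference and transparency about the ingredients (only the defining annihilator property and $aa''=a$ are used); the cost is a page of computation that Foulis's theorems already encapsulate.
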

\begin{proof}
Evidently, $0,1 \in C(x)$. By Theorem 1(ix) of \cite{F2}, the subset $C(x)$  is closed under orthocomplementation ${}'$. It follows from Theorem 2(iv) of \cite{F2} that it is closed also under arbitrary joins (hence, also meets). 
\end{proof}

Elements $x$ and $y$ of $R$ are said to be \emph{*-orthogonal} (in symbols, $x \perp y$), if they satisfy any of the following equivalent equations:
\[
xy^* = 0 = x^*y, \quad yx^* = 0 = x^*y, \quad yx^* = 0 = y^*x, \quad xy^* = 0 = y^*x .
\]
In particular, $x \perp y$ if and only if $x^* \perp y^*$. 
Projections $e$ and $f$ are orthogonal if and only if $ef = 0$ or, equivalently, $fe=0$. Evidently, this is the case if and only if $f \le e'$ (resp., $e \le f'$). We shall use these characterizations of *-orthogonality without explicit references.

\begin{example}{\cite{F2,B}} \label{B(H)}
Let $H$ be a Hilbert space, and let $\mathcal B(H)$ be the ring of all bounded linear operators on $H$. The transfer $^*$ from an operator to its adjoint is an involution, and $\mathcal B(H)$ is actually a Baer *-ring. The projections in this ring are just the orthogonal projections onto closed linear subspaces of $H$; recall that such projections form a complete orthomodular lattice isomorphic to the lattice of all closed subspaces of $H$. Operators $A$ and $B$ are *-orthogonal if and only if closures of their ranges are orthogonal subspaces of $H$. 

To interpret the operations $'$ and $''$, it is convenient to regard operators as operating on the right. Then, for $A \in \mathcal B(H)$, $A'$ is the projection onto the orthogonal complement of the range $\ran A$ of $A$, $(A^*)'$ is the projection onto the nullspace $\ker A$ of $A$, $A''$ is the projection onto the closure of $\ran A$, and $(A^*)''$ is the projection onto the orthogonal complement of $\ker A$.
\end{example}

\begin{example}{\cite{B,H}} 
A ring is said to be \emph{regular} if every its principal right ideal is generated by an idempotent. This is the case if and only if, for every its element $x$, there is an element $u$ (an \emph{inner generalized inverse} of $x$) such that $xux = x$. Suppose that a ring $A$ is  *-regular, i.e., a regular *-ring with proper involution \cite{B}. Then  a generalized inverse of $x$ coincides with its Moore-Penrose inverse $x^\dag$ and is uniquely defined. Further, $xy = 0$ if and only if $(1 - x^\dag x)y = y$. Moreover, $1 - x^\dag x$ is a projection.
Therefore $A$ is a Rickart *-ring with $x'= 1 - x^\dag x$ for every $x$; likewise, $(x^*)'= 1 - xx^\dag$. Further, $x'' = x^\dag x$ and $(x^*)'' = xx^\dag$.
\par
Note that complex $n \times n$ matrices form a *-regular ring with the conjugate transposition in the role of involution.
\end{example}

We now list some elementary properties of the operation $'$. Let $K_x := C(x^*x)$.
\begin{proposition}	\label{list2}
In $R$,
\begin{enumerate}
\thitem{a}
$aa'= 0 = a'a^*$,
\thitem{b}
$aa'' = a = (a^*)''a$, 
\thitem{c}
$ab = 0$ iff $a''b = 0$,
\thitem{d}
 $a \perp b$ iff ${a^*}''b = 0 = ba''$,
\thitem{e}
$x'' \in K_x$,
\thitem{f}
$(a''b)'' = (ab)'' \le b''$, 
\thitem{g}
if $e \le a''$, then $(ae)'' = e$,
\thitem{h}
$\{e \in P\colo ae = 0\}$ is a sublattice of $P$. 
\end{enumerate}
\end{proposition}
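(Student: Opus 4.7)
The plan is to dispatch each item by a direct application of the Rickart defining property $xy = 0 \Leftrightarrow y = x'y$, together with two elementary facts that should be noted at the outset: $x'$ is self-adjoint (as a projection), and $x'' = 1 - x'$.

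Parts (a)--(e) are essentially immediate once these are in hand. For (a), idempotence of $a'$ gives $a' = a'a'$, so $aa' = 0$ by definition, and taking adjoints yields $a'a^* = 0$. Parts (b) and (c) follow in one line each: (b) by expanding $a'' = 1 - a'$ and appealing to (a), together with the adjoint move for $(a^*)''a = a$; (c) directly from $a''b = 0 \Leftrightarrow b = a'b$. Then (d) is a rewriting of the four-equation definition of $\perp$ using (c) twice plus the self-adjointness of $a''$ and $(a^*)''$. For (e), I would compute $x''(x^*x)$ and $(x^*x)x''$ separately, using (b) and its adjoint to collapse both products to $x^*x$, so that $x'' \in K_x$.

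The less immediate items are (f), (g), (h). For (f), the key observation is that (c), applied twice in the chain $(ab)y = 0 \Leftrightarrow a(by) = 0 \Leftrightarrow a''(by) = 0 \Leftrightarrow (a''b)y = 0$, shows that $ab$ and $a''b$ have the same right annihilator in $R$; since the right projection is uniquely determined by that annihilator, $(ab)'' = (a''b)''$. The bound $(ab)'' \le b''$ then comes from $(ab)b' = a(bb') = 0$ via one more application of (c). For (g), the assumption $e \le a''$ gives $a''e = e$, which collapses the chain $(ae)y = 0 \Leftrightarrow a(ey) = 0 \Leftrightarrow a''(ey) = 0$ to $ey = 0$; hence $ae$ and $e$ have the same right annihilator, and since $e$ is already a projection, $(ae)'' = e'' = e$. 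For (h), it suffices to recognize that $\{e \in P : ae = 0\} = \{e \in P : e \le a'\} = [0,a']$, which is automatically a sublattice of $P$.

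I do not anticipate a real obstacle. The only recurring subtlety, appearing in (f) and (g), is the passage from equality of right annihilators in $R$ to equality of the associated primed projections; this is built into the Rickart definition but should be flagged explicitly on first use.
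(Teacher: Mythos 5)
Your proof is correct, and for most items it coincides with the paper's: (a), (b), (c), (e) and (h) are handled by exactly the same one-line computations (the paper phrases (c) as $a'''=a'$ where you use $a''=1-a'$, which is the same fact), and (d) is the same double application of (c) plus an adjoint. The one genuine difference is item (f): the paper does not prove it but cites Theorem 1(xii,xiv) of Foulis's \emph{Relative inverses in Baer *-semigroups}, whereas you give a self-contained argument -- $ab$ and $a''b$ have the same right annihilator by (c), and uniqueness of the generating projection forces $(ab)'=(a''b)'$, hence $(ab)''=(a''b)''$, with the inequality $(ab)''\le b''$ extracted from $(ab)b'=0$. This is a clean and fully elementary replacement for the citation, and it also makes your (g) independent of the external reference (the paper derives (g) from (f); your annihilator chain for (g) is just the same computation inlined). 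The only point worth flagging explicitly, as you yourself note, is the passage from equality of right annihilators to equality of the primed projections; this follows from the uniqueness clause in the Rickart definition and deserves the one sentence you give it.
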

\begin{proof}
(a) Use the definition of $'$, and apply $^*$.
\par
(b) By (a), $a''a^* = a^*$.
\par
(c) As $a'''= a'$.
\par 
(d) By (c), $ab^* = 0$ iff $a''b^* = 0$ iff $ba'' = 0$. Likewise,  $a^*b = 0$ iff ${a^*}''b = 0$. 
\par
(e) By (b).
\par
(f) See Theorem 1(xii,xiv) in \cite{F2}. 
\par
(g) If $a''e = e$, then, by (f), $(ae)'' = (a''e)'' = e'' = e$.
\par
(h) For $\{e \in P\colo ae = 0\} = [0,a']$. 
\end{proof}

By the \emph{Hermitian part} of $R$ we shall mean its subset $S := \{x \in R\colo x^* = x\}$ of self-adjoint elements. Evidently, $S$ contains $0$, $1$, all projections, and is closed under operations $+$ and $'$. Moreover, if $x,y \in S$, then $xy \in S$ if and only if $xy = yx$. Observe that $x \perp y$ in $S$ if and only if any of the products $xy$, $yx$, $xy''$, $x''y$, $y''x$, $yx''$ and $x''y''$ equals to $0$ (see P\ref{list2}) and that $C(x) \subseteq K_x$ whenever $x \in S$.

\section{Star order}	\label{star}

We write $x \cle y$ to mean that $x \perp y - x$. This is the case if and only if elements $x$ and $y$ of $R$ satisfy any of the following four equivalent pairs of conditions:
\begin{gather*}
x^*x = x^*y \text{ and } xx^* = yx^*, \quad 
x^*x = y^*x \text{ and } xx^* = xy^*, \\
x^*x = x^*y \text{ and } xx^* = xy^*, \quad
x^*x = y^*x \text{ and } xx^* = xy^*. 
\end{gather*}
In $S$, they reduce to the simple equation $x^2 = xy$ (or to an equivalent one, $x^2 = yx$). Evidently, $x \cle y$ if and only $x^* \cle y^*$. Below, we shall use these characterizations of the relation $\cle$  without explicit references. 

It was observed in \cite{D} that if the involution in $R$ is proper, i.e., satisfies the *-cancellation law
\[ 
x^*x = 0 \text{ only if } x = 0,
\] 
then the relation $\cle$  is an order on $R$; it is now known as the \emph{star order} (or \textit{*-order}). (Actually, the converse also holds.) In the sequel, $R$ is assumed to have a proper involution.

We now list several elementary but useful properties of the star order. Let $\cvee$ and $\cwedge$ stand for (generally, partial) lattice operations in $R$. 

 \begin{lemma}   \label{list1}
In $R$,
\begin{enumerate}
\thitem{a} $0$ is the least element in $R$, 
\thitem{b} the star order coincides on $P$ with the usual order of projections,
\thitem{c} $a \in P$ if and only if $a \cle 1$,
\thitem{d} every right (left) invertible element is maximal,
\thitem{e} if $e \le f$ and $e \in K_x$, then $xe \cle xf$,
\thitem{f} 
$K_x = \{e\colo xe \cle x\}$,
\thitem{g} if $a \perp b$, then $a \cwedge b = 0$ and $a \cvee b = a + b$.
\end{enumerate}
\end{lemma}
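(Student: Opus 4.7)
Items (a)--(d) dispose of themselves by direct unfolding of the four equivalent forms of $x \cle y$ listed just before the lemma. For (a), $0 \perp y$ holds trivially. For (b), each of the defining equations for $e \cle f$ with $e,f \in P$ reduces to $e = ef$, which is the projection order. For (c), from $a \cle 1$ I get $a^*a = a^* = aa^*$; taking the adjoint of $a^*a = a^*$ yields $a^*a = a$, whence $a = a^*$, and then $a^2 = a^*a = a$. For (d) with $ab = 1$, passing to adjoints gives $b^*a^* = 1$, and left-multiplying $a^*a = a^*c$ by $b^*$ collapses to $a = c$; the left-invertible case is symmetric via $aa^* = ca^*$ and right-multiplication by $b^*$.

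Items (e) and (f) are straightforward bookkeeping with $ex^*x = x^*xe$ and $ef = fe = e$. Expanding the four candidate products shows
\[
(xe)^*(xe) = ex^*xe = x^*xe = x^*x \cdot ef = ex^*xf = (xe)^*(xf),
\]
and similarly $(xe)(xe)^* = xex^* = x(fe)x^* = (xf)(xe)^*$, giving (e). For (f), the forward inclusion is the $f = 1$ case of (e); conversely, $xe \cle x$ unfolds to $ex^*x = ex^*xe$, and since the right-hand side is self-adjoint, taking its adjoint yields $x^*xe = ex^*xe$, whence $ex^*x = x^*xe$ and $e \in K_x$.

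The one genuinely substantive step is (g). The inequalities $a, b \cle a+b$ and the confirmation that $a+b \cle d$ for any common upper bound $d$ are mechanical: the cross-terms $a^*b, b^*a, ab^*, ba^*$ all vanish under $a \perp b$, so $(a+b)^*(a+b)$ and $(a+b)^*d$ both collapse to $a^*a + b^*b$ (with the companion equation handled identically), so $a \cvee b = a+b$. The only clever step is showing $a \cwedge b = 0$. Given any common lower bound $c$, so that $c^*a = c^*c = c^*b$, the key move is the identity
\[
(c^*c)(c^*c)^* = (c^*a)(c^*b)^* = c^*(ab^*)c = 0,
\]
which routes the orthogonality $ab^* = 0$ into a squared self-adjoint expression, giving $(c^*c)^2 = 0$. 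Two successive applications of the *-cancellation law (first to $c^*c$, then to $c$) then yield $c = 0$. The main obstacle is exactly spotting this product formation, since nothing in the equational unfolding of $c \cle a$ and $c \cle b$ immediately forces the orthogonality hypothesis into play; once the right product is chosen, the proper involution of this section does the rest of the work.
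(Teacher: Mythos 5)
Your proposal is correct. On items (a)--(e) it follows essentially the same computations as the paper; for instance your verification of (e) is exactly the paper's pair of identities $xf(xe)^* = xe(xe)^*$ and $(xe)^*xf = (xe)^*xe$, read in the other order. The genuine difference lies in (f) and (g), for which the paper gives no argument at all and simply cites equivalence (11) and Lemma 1 of \cite{J2}; you supply self-contained proofs, and both are sound. For (f), the point that $xe \cle x$ collapses to the single equation $ex^*x = ex^*xe$ (the companion condition $xex^* = xex^*$ being vacuous), whose right-hand side is self-adjoint so that taking adjoints yields $x^*xe = ex^*x$, is a clean direct route to $e \in K_x$. For (g), the identity
\[
(c^*c)(c^*c)^* = (c^*a)(c^*b)^* = c^*(ab^*)c = 0
\]
for any common lower bound $c$, followed by two applications of the *-cancellation law, is exactly the kind of argument the reference hides; note that since $c^*c$ is self-adjoint this is indeed of the form $z^*z = 0$, so the cancellation law applies as you use it. The upshot is a proof of the lemma that is independent of \cite{J2}, at the cost of some length; the paper's version is shorter but sends the reader to chase two external results. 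One cosmetic remark: in (b) the two defining equations reduce to $e = ef$ and $e = fe$ respectively, and these are equivalent for projections only after taking adjoints, which is worth stating explicitly as the paper does with $e = ef = fe$.
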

\begin{proof}
(a) evident.
\par
 (b) In fact, $e^*e = e^*f$ and $ee^* = fe^*$ if and only if $e = ef = fe$.
 \par
(c)
By definition of $\cle$, $a \cle 1$ iff $a^*a = a = aa^*$ iff $aa^* = a^* = a^*a$. This is the case if and only if $a = a^*$ and $a^2 = a$.
\par
(d)
If $ay = 1$ and $a \cle z$, then $y^*a^* = 1$, $a^*a = a^*z$ and $a = y^*a^*a = y^*a^*z = z$. Likewise, if $ya = 1$.
\par
(e) If $e \le f$, then $xf(xe)^* = xfex^* = xeex^* = xe(xe)^*$. If, moreover, $e \in K_x$, then $(xe)^*xf = ex^*xf = x^*xef = ex^*xe = (xe)^*xe$.
\par
(f) See the equivalence (11) in \cite{J2}.
\par
(g)
 This is Lemma 1 of \cite{J2}.
\end{proof}

Apart from items (1b) and (2b), which go back (for matrices) to \cite{He}, the next lemma is an abstract algebraic version of \cite[Proposition 2.3]{ACMS}, which deals with bounded Hilbert space operators.

\begin{lemma}		\label{cle:l}
\begin{enumerate}
\thitem{1} For $a,b \in R$, the following are equivalent:
\begin{enumerate}
\thitem{a}
$aa^* = ba^*$,
\thitem{b} $b = a + c$ for some $c$ with $ca^* = 0$,
\thitem{c}
$a = ba''$,
\thitem{d}
$a = be$ for some $e \in P$.
\end{enumerate}
\thitem{2} For $a,b \in R$, the following are equivalent:
\begin{enumerate}
\thitem{a}
$a^*a = a^*b$,
\thitem{b}
$b = a + c$ for some $c$ with $a^*c = 0$,
\thitem{c}
$a = (a^*)''b$,
\thitem{d}
$a = fb$ for some $f \in P$.
\end{enumerate}
\end{enumerate}
\end{lemma}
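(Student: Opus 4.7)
My plan is to exploit the obvious symmetry between parts (1) and (2). Substituting $a \mapsto a^*$, $b \mapsto b^*$, $c \mapsto c^*$ in each clause of part (1) and then taking adjoints throughout converts (1a)--(1d) into precisely (2a)--(2d), using that projections (and hence $a''$, $(a^*)''$, and any $e \in P$) are self-adjoint. So it is enough to establish part (1); the equivalences of (2) then follow by instantiation.

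For (1) I would run the cycle (a)$\Leftrightarrow$(b)$\Rightarrow$(c)$\Rightarrow$(d)$\Rightarrow$(a). The equivalence (a)$\Leftrightarrow$(b) is the trivial rearrangement obtained by setting $c := b - a$. Step (c)$\Rightarrow$(d) is immediate by taking $e := a''$. Step (d)$\Rightarrow$(a) is a one-line computation: if $a = be$ with $e$ a projection, then $aa^* = be \cdot eb^* = beb^* = b(be)^* = ba^*$.

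The substantial step is (a)$\Rightarrow$(c). Here I would rewrite the hypothesis as $(b-a)a^* = 0$, apply $^*$ to obtain $a(b-a)^* = 0$, then invoke Proposition \ref{list2}(c) to get $a''(b-a)^* = 0$, and apply $^*$ once more (using self-adjointness of $a''$) to arrive at $(b-a)a'' = 0$. Combining with $aa'' = a$ from Proposition \ref{list2}(b) yields $ba'' = aa'' + (b-a)a'' = a$, which is (c).

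The main obstacle I anticipate is precisely this implication. Proposition \ref{list2}(d), which at first sight seems to apply, actually requires the full $*$-orthogonality $a \perp (b-a)$, whereas we possess only the one-sided annihilation $(b-a)a^* = 0$. The workaround is the double-adjoint detour through Proposition \ref{list2}(c), which demands only that one-sided information. Once this subtlety is recognised, the remaining implications reduce to routine bookkeeping with $aa'' = a$ and the self-adjointness and idempotence of projections.
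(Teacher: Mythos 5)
Your proof is correct and takes essentially the same route as the paper's: the substantial implication rests on the identical chain $ca^* = 0 \Leftrightarrow ac^* = 0 \Leftrightarrow a''c^* = 0 \Leftrightarrow ca'' = 0$ (with $c = b - a$) followed by $ba'' = aa'' + ca'' = a$, and the remaining steps coincide. Your explicit reduction of part (2) to part (1) via $^*$ is the same symmetry the paper invokes implicitly when it demonstrates only (1).
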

\begin{proof}
We shall demonstrate only (1).
\par
(a)$\to$(b):
put $c := b - a$; then $a + c = b$ and $ca^* = ba^* - aa^* = 0$.

(b)$\to$(c):
first observe that $ca^* = 0$ iff $ac^* = 0$ iff $a''c^* = 0$ iff $ca'' = 0$. 
Now, $ba'' = aa'' + ca'' = a + 0 = a$.

(c)$\to$(d):
put $e := a''$.

(d)$\to$(a):
$a^* = eb^*$, and $aa^* = beeb^* = ba^*$.
\end{proof}

\begin{theorem}  \label{cle!}
The following assertions are equivalent for all $a,b \in R$:
\begin{enumerate}
\thitem{a} $a \cle b$,
\thitem{b} $b = a + c$ for some $c \perp a$,
\thitem{c} $ba'' = a = (a^*)''b$,
\thitem{d} $fb = a = be$ for some $e,f \in P$, 
\thitem{e} $a = ba''$ and $a''$ commutes with $b^*b$,
\thitem{f} $a = ba''$ and $a^*b \in S$.
\end{enumerate}
\end{theorem}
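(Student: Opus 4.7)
The plan is to split the six conditions into two groups. The first four, (a)--(d), fall out directly from Lemma \ref{cle:l}: by the definition of $\cle$, asserting $a \cle b$ amounts to the conjunction of the two equations $aa^* = ba^*$ and $a^*a = a^*b$. Lemma \ref{cle:l}(1) re-expresses the first equation as any of $b = a + c$ with $ca^* = 0$, or $a = ba''$, or $a = be$ for some projection $e$; Lemma \ref{cle:l}(2) does the analogous thing with the second. Conjoining the matching items gives (a)$\Leftrightarrow$(c)$\Leftrightarrow$(d) immediately. For (a)$\Leftrightarrow$(b) I would note that a single witness $c$ suffices, since the two equations $ca^* = 0$ and $a^*c = 0$ hold simultaneously iff $c \perp a$, so the pair of additive decompositions from Lemma \ref{cle:l} collapses into the one required by (b).

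The harder step will be (c)$\Leftrightarrow$(e)$\Leftrightarrow$(f). Under the shared premise $a = ba''$ (and using $a^* = a''b^*$ because $a''$ is a projection), I first compute
\[
a^*a = a''b^*ba'', \qquad a^*b = a''b^*b, \qquad b^*a = b^*ba''.
\]
By Lemma \ref{cle:l}(2), the remaining clause $a = (a^*)''b$ from (c) is equivalent to $a^*a = a^*b$, i.e.\ $a''b^*ba'' = a''b^*b$. Taking the adjoint of this equation gives $a''b^*ba'' = b^*ba''$, so both right-hand sides agree, yielding $a''b^*b = b^*ba''$; that is, $a''$ commutes with $b^*b$. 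Conversely, if $a''$ and $b^*b$ commute, then the idempotence of $a''$ gives $a''b^*ba'' = b^*ba''a'' = b^*ba'' = a''b^*b$, recovering the original equation. Hence (c)$\Leftrightarrow$(e). For (e)$\Leftrightarrow$(f) I observe that self-adjointness of $a^*b$ reads $a''b^*b = b^*ba''$, which is exactly the commutation condition from (e).

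The only subtle point will be the adjoint-and-idempotence bookkeeping in the middle paragraph, in particular the trick that right-multiplication by $a''$ is the identity on any element already of the form $a''x$; this is what allows the single equation $a''b^*ba'' = a''b^*b$, together with its adjoint, to force the commutation $a'' b^* b = b^* b a''$. Apart from this, the proof is a mechanical application of Lemma \ref{cle:l} and the self-adjointness of projections.
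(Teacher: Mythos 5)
Your proposal is correct and follows essentially the same route as the paper: (a)--(d) are read off from Lemma \ref{cle:l} exactly as you describe, and (e), (f) are handled by the same computation based on $a^* = a''b^*$ under the premise $a = ba''$. The only (immaterial) difference is that for the direction (e) $\Rightarrow$ (c) you return through Lemma \ref{cle:l}(2) by a direct adjoint-and-idempotence computation, whereas the paper instead invokes L\ref{list1}(f) (namely $a'' \in K_b$ implies $ba'' \cle b$); both arguments are sound.
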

\begin{proof}
Equivalence of (a), (b), (c) and (d) follows immediately from the preceding lemma.
Assume (a); then $a''b^*b = (ba'')^*b = a^*b = a^*a = b^*a = b^*ba''$ by (c), and (e) follows. Assume (e); then $ba'' \cle b$ by L\ref{list1}(f), and (a) follows. At last, (f) is equivalent to (e):  the equality $a = ba''$ implies that $a'' \in K_b$ if and only if $a^*b = b^*a$.
\end{proof}
 
Equivalence of (a), (e) and (f) was observed already in \cite{J2} (see there the paragraph subsequent to the proof of Lemma 4).

\begin{corollary}	\label{''}
If $a \cle b$, then $b'\le a'$ and $a'' \le b''$.
\end{corollary}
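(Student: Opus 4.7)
The plan is to read both inequalities off Theorem~\ref{cle!}(d), which gives the convenient factorization $a = fb = be$ for some projections $e,f \in P$. Both pieces of the corollary should follow almost immediately from this factorization together with the defining property of the $'$ operation and the basic identities $aa'=0$, $a = aa''$ from Proposition~\ref{list2}.

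For $b' \le a'$, I would argue as follows. The inequality $b' \le a'$ in $P$ amounts (by the characterization $ef = e$ iff $e \le f$) to $a'b' = b'$, and by the defining property of $a'$ this is the same as saying $ab' = 0$. But $bb' = 0$ by Proposition~\ref{list2}(a), so multiplying on the left by $f$ from the factorization $a = fb$ gives $ab' = fbb' = 0$, as desired. (Equivalently, one can phrase this in terms of right annihilators: $a \cle b$ forces $\{y : by = 0\} \subseteq \{y : ay = 0\}$, hence $b' \le a'$.)

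For $a'' \le b''$, the cleanest route is to take orthocomplements in the orthomodular lattice $P$. Since for any $x \in R$ one has $x'' = 1 - x'$ (the two meanings of $'$ agree on projections), and orthocomplementation is order-reversing, $b' \le a'$ yields $a'' = 1 - a' \le 1 - b' = b''$. Alternatively, one could apply Proposition~\ref{list2}(f) directly to the factorization $a = fb$ to get $a'' = (fb)'' \le b''$, bypassing orthocomplementation; either route is a one-line deduction.

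I do not foresee any real obstacle: every ingredient is already in place from the preceding lemmas, and the whole argument is just bookkeeping with the factorization supplied by Theorem~\ref{cle!}(d). The only thing to be careful about is keeping the two uses of the symbol $'$ (right-annihilating projection of an element of $R$ versus orthocomplementation in $P$) straight, which is exactly why the quick orthocomplement argument for the second inequality is safe.
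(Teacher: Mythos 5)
Your proof is correct and follows essentially the same route as the paper's: both hinge on the computation $ab' = (\text{projection})\,bb' = 0$ obtained from the factorization of $a$ through $b$ in Theorem~\ref{cle!}, followed by orthocomplementation in $P$ to transfer the inequality between $b'\le a'$ and $a''\le b''$. The only cosmetic difference is that you derive $b'\le a'$ first (directly from the defining property of $'$) and the paper derives $a''\le b''$ first (via P\ref{list2}(c)); both directions are sound.
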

\begin{proof}
If $a \cle b$, then $0 = (a^*)''bb'= ab' = a''b'$ (see T\ref{cle!}(c) and P\ref{list2}(a,c)). Hence, $a'' \perp b'$ and $a'' \le b''$. In the ortholattice $P$, the last inequality implies that $b'\le a'$.
\end{proof}

\begin{remark}
Theorem 5 in \cite{DM} presents a description of the star order in $\mathcal B(H)$, which, in the notation of our Example \ref{B(H)}(with operators operating on the right!), states that $A \cle B$ if and only if there are projection operators $P$ and $Q$ such that $\ran P = \overline{\ran A}$ (i.e. $P = A''$), $\ker A = \ker Q$ (i.e., ${A^*}' = {Q^*}' = Q'$), $AP = BP$ and $QA = QB$.
Since $AA'' = A = {A^*}''A$ (see P\ref{list2}(b)), this amounts to the equality ${A^*}''B = A = BA''$. Therefore, the equivalence of (a) and (c) in Theorem \ref{cle!} may be considered as a simple abstract analogue of the mentioned description and provides the latter with a  short algebraic proof.
\end{remark}

\begin{remark}
The left-star and right-star partial orders  \cite{BM,DW,MBM}, defined first for matrices, have been applied also to operators in $\mathcal B(H)$. Under the conventions of Example \ref{B(H)}, they are described respectively by  
\begin{gather*}
A\, {*{\cle}}\, B :\equiv AA^* = BA^* \text{ and }\ran A \subseteq \ran B, \\
A\, {{\cle}*}\, B :\equiv A^*A = A^*B \text{ and }\ran A^* \subseteq \ran B^*.
\end{gather*}
Observe that $\ran A \subseteq \ran B$ iff $(\ran B)^\perp \subseteq (\ran A)^\perp$ iff $B' \le A'$ iff $A'' \le B''$, where $X^\perp$ stands for the orthogonal complement of $X$, and likewise $\ran A^* \subseteq \ran B^*$ iff $(A^*)'' \le (B^*)''$. So, the relationships 
\[ 
x\, {*{\cle}}\, y :\equiv xx^* = yx^* \text{ and } x'' \le y'', \
\text{resp.}, \
x\, {{\cle}*}\, y :\equiv x^*x = x^*y \text{ and } (x^*)'' \le (y^*)''
\]
introduce the left-star and right-star orders in abstract Rickart *-rings. The descriptions can be simplified applying Lemma \ref{cle:l}. 
\end{remark}

The above theorem leads us to the following abstract algebraic version of \cite[Lemma 4.3]{G} dealing with self-adjoint Hilbert space operators.
\begin{corollary}	\label{cle!!}
The following assertions are equivalent for $a,b \in S$:
\begin{enumerate}
\thitem{a} $a \cle b$,
\thitem{b} $b = a + c$ for some $c \in S$ with $c \perp a$,
\thitem{c1} $a = ba''$,
\thitem{c2} $a = a''b$,
\thitem{d1} $a = be$ for some $e \in P$,
\thitem{d2} $a = fb$ for some $f \in P$.
\end{enumerate}
\end{corollary}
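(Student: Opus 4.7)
The plan is to derive the equivalences directly from Theorem \ref{cle!} together with Lemma \ref{cle:l}, exploiting self-adjointness to split the two-sided conditions of the theorem into pairs of equivalent one-sided conditions.

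First I would deduce each of the remaining clauses from (a). Because $a \in S$ gives $(a^*)'' = a''$, the conjunction $ba'' = a = (a^*)''b$ of Theorem \ref{cle!}(c) splits as the two clauses (c1) and (c2); similarly the conjunction $fb = a = be$ of Theorem \ref{cle!}(d) yields (d1) and (d2) simultaneously. For clause (b) of the corollary, Theorem \ref{cle!}(b) supplies some $c \perp a$ with $b = a + c$, and then $c = b - a$ is automatically self-adjoint since $a, b \in S$.

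For the converses, the single observation doing the work is that, for $a, b \in S$, the two defining equalities $a^*a = a^*b$ and $aa^* = ba^*$ of the star order become $a^2 = ab$ and $a^2 = ba$ respectively, and either of these implies the other by taking adjoints (using $a^* = a$ and $b^* = b$). So, if (c1) or equivalently (d1) is assumed, Lemma \ref{cle:l}(1) yields $aa^* = ba^*$; its adjoint is $a^*a = a^*b$, and both halves of the defining pair of $\cle$ hold, giving $a \cle b$. Symmetrically, (c2) or (d2) supplies via Lemma \ref{cle:l}(2) the equality $a^*a = a^*b$, and the adjoint furnishes the other half. Finally, clause (b) implies (a) already by Theorem \ref{cle!}, without using self-adjointness.

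I do not anticipate a genuine obstacle: the whole argument is a specialization of Theorem \ref{cle!} to self-adjoint elements, and the only substantive use of $a, b \in S$ is in the adjoint-taking trick that turns each one-sided equation into the two-sided pair defining $\cle$.
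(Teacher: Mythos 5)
Your proof is correct and takes essentially the same route as the paper: both reduce the corollary to Theorem~\ref{cle!} (and Lemma~\ref{cle:l}) and use self-adjointness to turn each one-sided condition into the two-sided pair defining $\cle$ --- the paper via the direct computation $eb=(be)^*=a^*=a$ and $a^2=beeb=ab$, you via taking adjoints of $aa^*=ba^*$. No gaps.
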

\begin{proof}
In virtue of T\ref{cle!}, it suffices to show that (d1) and (d2) are equivalent and imply (a). Assume (d1); then $eb = (be)^* = a^* = a$. Now, $a^2 = beeb = beb = ab$. Likewise, (d2) implies (d1) and (a).
\end{proof}

In particular, if $a,b \in S$, then $a''b = ba'' \in S$ whenever $a \cle b$.

\section{Joins and meets of bounded pairs of elements} \label{main}

As the unit 1 is invertible, it follows from L\ref{list1}(d) that $1 \cvee a$ is not defined unless $a \cle 1$. Then L\ref{list1}(c) leads us to the following conclusion, which shows that, except for trivial cases, $R$ never is an upper semilattice.
\begin{corollary}
$R$ is an upper semilattice if and only if $P = R$.
\end{corollary}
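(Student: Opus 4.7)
My plan is to handle the two implications separately, with the forward direction being essentially packaged by the two lemma items already cited in the paragraph and the backward direction being almost immediate from the coincidence of the star order with the projection order on $P$.

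For the forward direction, I will assume $R$ is an upper semilattice and pick an arbitrary $a \in R$. Since a semilattice operation must be defined on every pair, the join $a \cvee 1$ exists. Any upper bound $z$ of $\{a,1\}$ satisfies $1 \cle z$; but $1$ is both left and right invertible (it is its own inverse), so by L\ref{list1}(d) it is maximal, forcing $z = 1$. In particular the join itself equals $1$, so $a \cle 1$, and then L\ref{list1}(c) gives $a \in P$. Since $a$ was arbitrary, $R \subseteq P$, and the reverse inclusion is trivial.

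For the backward direction, suppose $P = R$. By L\ref{list1}(b) the star order on $R$ coincides with the canonical projection order on $P$. Since $P$ is an orthomodular lattice (as recorded in Section \ref{rick}), it is in particular an upper semilattice; hence so is $R$ under $\cle$.

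There is no real obstacle here: the only substantive point is recognising that the maximality of $1$, combined with the characterisation $a \in P \iff a \cle 1$, converts the bare existence of $a \cvee 1$ into the structural conclusion $a \in P$. Everything else is bookkeeping.
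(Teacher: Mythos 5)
Your proof is correct and follows essentially the same route as the paper: the forward direction uses the maximality of the invertible element $1$ together with the characterisation $a \in P \iff a \cle 1$, and the backward direction uses the coincidence of $\cle$ with the projection order on $P$. No issues.
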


However, $R$ may have the upper bound property or be a lower semilattice. The subsequent result is 
a direct analogue of \cite[Theorem 6]{C} stated for self-adjoint Hilbert operators. (The demonstration of the theorem in \cite{C}  contained a non-algebraic argument.) 

\begin{theorem}  \label{ab<x}
If $a,b \cle x$, then
\begin{enumerate}
\thitem{a} $a \cwedge b$ exists and equals to $x(a'' \wedge b'')$,
\thitem{b} $a \cvee b$ exists and equals to $x(a'' \vee b'')$.
\end{enumerate}
If, moreover, $a,b,x \in S$, then $a \cwedge b, a \cvee b  \in S$.
\end{theorem}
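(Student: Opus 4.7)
\medskip

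\noindent\textbf{Proof proposal.} My plan is to set $p := a'' \wedge b''$ and $q := a'' \vee b''$, and to verify that $xp$ and $xq$ are the desired meet and join. Since $a \cle x$ and $b \cle x$, Theorem \ref{cle!}(e) gives $a'', b'' \in K_x$, and Lemma \ref{C-latt} then yields $p, q \in K_x$. Corollary \ref{''} gives $a'', b'' \le x''$, so $p, q \le x''$; hence by P\ref{list2}(g), $(xp)'' = p$ and $(xq)'' = q$.

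To see that $xp \cle a$, I apply Theorem \ref{cle!}(e) to the pair $(xp, a)$: I need $a \cdot p = xp$ (true, since $a = xa''$ and $p \le a''$ gives $ap = xa''p = xp$) and $p$ commuting with $a^*a = a'' x^*x a''$. The latter is a short computation that absorbs $a''p = pa'' = p$ and uses $px^*x = x^*xp$. Symmetrically $xp \cle b$. For the join direction, $a \cle xq$ reduces, via T\ref{cle!}(e), to the identities $xq \cdot a'' = xa'' = a$ and $a''$ commuting with $(xq)^*(xq) = qx^*x$; these again follow from $a'' \le q$ and $a'', q \in K_x$. Similarly $b \cle xq$.

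For the universal property, the key move is transitivity of $\cle$. If $c \cle a, b$, then $c \cle x$, so $c'' \in K_x$ and $c = xc''$; Corollary \ref{''} gives $c'' \le p$. Hence $xp \cdot c'' = xc'' = c$, and since $c'', p \in K_x$ with $c'' \le p$, $c''$ commutes with $(xp)^*(xp) = px^*x$, giving $c \cle xp$ by T\ref{cle!}(e). If instead $a, b \cle c$, then by Corollary \ref{''}, $a'', b'' \le c''$ and so $q \le c''$; and $a \cle c, b \cle c$ give $a'', b'' \in K_c$, so by L\ref{C-latt}, $q \in K_c$. The equation $cq = xq$ follows from $(c-x)a'' = 0 = (c-x)b''$ (each of $a$ and $b$ admits two representations $ca'' = xa''$ etc.), so $a'', b'' \le (c-x)'$, and hence $q \le (c-x)'$; this yields $xq = cq = c(xq)''$, with $(xq)'' = q$ commuting with $c^*c$, so $xq \cle c$ by T\ref{cle!}(e).

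For the self-adjoint addendum, the point is that when $a, b, x \in S$, the remark following Corollary \ref{cle!!} strengthens $a'' \in K_x$ to $a'' \in C(x)$, and similarly $b'' \in C(x)$. By Lemma \ref{C-latt}, $C(x)$ is a sublattice of $P$, so $p, q \in C(x)$, i.e., they commute with $x$. Since $x$ and the projections $p, q$ are all self-adjoint and commute, $xp$ and $xq$ are self-adjoint. The main potential obstacle is bookkeeping the commutation relations: one must be careful that $c'' \in K_x$ (not merely in $K_a$) when passing from $c \cle a$ to $c \cle xp$, which is precisely where transitivity of $\cle$ is essential.
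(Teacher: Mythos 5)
Your proposal is correct and follows essentially the same route as the paper: take $x(a''\wedge b'')$ and $x(a''\vee b'')$, use $a'',b''\in K_x$ and Lemma \ref{C-latt} to get the lattice combinations into $K_x$, Corollary \ref{''} for comparing right projections, and Proposition \ref{list2}(h) to obtain $(c-x)(a''\vee b'')=0$ in the join case; the self-adjoint addendum via $C(x)$ is also the paper's argument. The only difference is cosmetic — you unwind Theorem \ref{cle!}(e) by direct computation where the paper invokes Lemma \ref{list1}(e) and (f), which package the same calculations.
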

\begin{proof}
 Suppose that $a,b \cle x$ and, consequently, $a = xa'', b = xb''$  and $a'', b'' \in K_x$ (T\ref{cle!}(e)).
 \par
(a) Then $a'' \wedge b'' \in K_x$ (L\ref{C-latt}) and, by L\ref{list1}(e), $c := x(a'' \wedge b'')$ is a lower bound of $a$ and $b$. Suppose that $d$ is one more lower bound; then $d'' \le a''$, $d'' \le b''$ (C\ref{''}) and $d'' \le a'' \wedge b''$. Moreover, $d \cle x$ and, consequently, $d = xd''$ and $d'' \in K_x$ (T\ref{cle!}(e)). Now $d \cle c$ by L\ref{list1}(e), and $c$ is the greatest lower bound of $a$ and $b$.
\par
(b) Likewise, $c := x(a'' \vee b'')$ is an upper bound of $a$ and $b$. Suppose that $y$ is one more upper bound, then $a = ya'', b = yb''$ and $a'',b'' \in K_y$ (T\ref{cle!}(e)); by L\ref{C-latt}, also $a'' \vee b'' \in K_y$. Hence, $d:= y(a'' \vee b'') \cle y$ by L \ref{list1}(f). Now $c = d$: as $(x-y)a'' = 0 = (x-y)b''$, P\ref{list2}(h) implies that $(x-y)(a'' \vee b'') =0$. Thus $c \cle y$, and therefore $c$ is the least upper bound of $a$ and $b$.
\par
At last, if $a,b, x \in S$, then, by L\ref{C-latt} together with C\ref{cle!!}(c1,c2), $a'' \wedge b'', a'' \vee b'' \in C(x)$. Therefore, $a \cwedge b, a \cvee b \in S$, indeed.
\end{proof}

Item (b) of the theorem confirms that $R$ has the upper bound property (see the relevant discussion in Introduction).  The theorem also implies that elements of $R$ having the join have also the meet. Therefore, every segment $[0,x]$ is a sublattice of $R$, i.e., it is a lattice under $\cle$ and joins and meets in $[0,x]$ are also joins, resp., meets in $R$. In particular, $P$ is a sublattice of $R$, i.e., the operations $\cvee$ and $\cwedge$ agree on $P$ with $\vee$, resp.,  $\wedge$ (cf.\ L\ref{list1}(b,c)). 

\begin{corollary} \label{cor<x}
If $a$ and $b$ have an upper bound, then 
\begin{enumerate}
\thitem{a}  
$(a \cvee b)'' = a'' \vee b''$ and $(a \cwedge b)'' = a'' \wedge b''$,
\thitem{b}  
\(
a(a'' \wedge b'') = a \cwedge b = b(a'' \wedge b'')
\). 
\end{enumerate}
\end{corollary}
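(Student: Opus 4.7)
The plan is to deduce both items directly from Theorem \ref{ab<x}, which already gives the explicit formulas $a\cvee b = x(a''\vee b'')$ and $a\cwedge b = x(a''\wedge b'')$, together with the projection identities and P\ref{list2}(g).

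For part (a), I would first observe that since $a\cle x$ and $b\cle x$, Corollary \ref{''} yields $a''\le x''$ and $b''\le x''$, so both $a''\vee b''$ and $a''\wedge b''$ lie below $x''$. Then P\ref{list2}(g) applied with $e := a''\vee b''$ (respectively $e := a''\wedge b''$) and the operator $x$ immediately gives $(x(a''\vee b''))'' = a''\vee b''$ and $(x(a''\wedge b''))'' = a''\wedge b''$. Substituting the formulas from T\ref{ab<x} finishes part (a).

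For part (b), I would start from $a\cwedge b = x(a''\wedge b'')$ and from $a = xa''$, which holds by T\ref{cle!}(c) since $a\cle x$. Then
\[
a(a''\wedge b'') = xa''(a''\wedge b'') = x(a''\wedge b'') = a\cwedge b,
\]
where the middle equality uses that $a''\wedge b''\le a''$, so $a''(a''\wedge b'') = a''\wedge b''$ in the ortholattice $P$. The symmetric calculation starting from $b = xb''$ gives $b(a''\wedge b'') = a\cwedge b$.

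I do not anticipate a real obstacle; the content of the corollary is essentially bookkeeping on top of T\ref{ab<x}. The only minor care is in noticing that one must invoke C\ref{''} to know that $a''\vee b''$ (and $a''\wedge b''$) is dominated by $x''$ before applying P\ref{list2}(g); without this, that lemma would not be available.
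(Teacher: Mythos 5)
Your proof is correct and follows essentially the same route as the paper: part (a) is the identical computation (the paper cites P\ref{list2}(f), of which (g) is the immediate consequence you use), after first noting via C\ref{''} that $a''\vee b''$ and $a''\wedge b''$ lie below $x''$. For (b) the paper simply combines (a) with T\ref{cle!}(c) applied to $a\cwedge b\cle a$, whereas you expand $a=xa''$ and compute directly; both are one-line arguments and equally valid.
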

\begin{proof}
(a) Suppose that $a,b \cle x$. Then $a'' \vee b'' \le x''$ (C\ref{''}) and $(a \cvee b)'' = (x(a'' \vee b''))'' = (x''(a'' \vee b'')'' = (a'' \vee b'')'' = a'' \vee b''$ (see P\ref{list2}(f)), 
and likewise $(a \cwedge b)'' = a'' \wedge b''$.
\par
(b) By (a) and T\ref{cle!}(c).
\end{proof}

 As to (a), cf.\ equations (14iii) in \cite{J2}. 
Further, the corollary implies that $S$ as well has the least upper bound property and that the join of two self-adjoint elements in $S$ is their join also in $R$.

The following generalisation of the theorem is proved similarly. 

\begin{theorem} \label{baer}
Suppose that $R$ is a Baer *-ring. If $x$ is an upper bound of a  nonempty subset $A \subseteq R$, then
\begin{enumerate}
\thitem{a} $x\bigwedge(a''\colo a \in A)$ is the greatest lower bound of $A$,
\thitem{b} $x\bigvee(a''\colo a \in A)$ is the least upper bound of $A$.
\end{enumerate}
If, moreover $A \subseteq S$ and $x \in S$, then the bounds both belong to $S$.
\end{theorem}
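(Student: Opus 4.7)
The plan is to imitate the proof of Theorem \ref{ab<x} almost verbatim, replacing the binary lattice operations on right projections by $p := \bigwedge(a'' \colo a \in A)$ and $q := \bigvee(a'' \colo a \in A)$; these exist because $R$ is Baer, so $P$ is complete. The only preparation needed is to place $p$ and $q$ in $K_x$: since each $a \cle x$ gives $a'' \in K_x$ (Theorem \ref{cle!}(e)), and Lemma \ref{C-latt} asserts that $K_x$ is closed under all joins and meets existing in $P$, we indeed have $p, q \in K_x$. Lemma \ref{list1}(e) then applies to $p \le a''$ and $a'' \le q$.

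For (a), Lemma \ref{list1}(e) gives $xp \cle xa'' = a$, so $xp$ is a lower bound. Any other lower bound $d$ satisfies $d \cle x$ by transitivity, hence $d = xd''$ with $d'' \in K_x$ (Theorem \ref{cle!}(e)); by Corollary \ref{''} we have $d'' \le a''$ for all $a$, hence $d'' \le p$, and Lemma \ref{list1}(e) again yields $d \cle xp$. The upper-bound half of (b) is symmetric: $a = xa'' \cle xq$ for every $a \in A$.

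The one step not a literal copy of the binary proof is the minimality of $xq$, and this is where the main (though still mild) subtlety lies. If $y$ is another upper bound of $A$ then $a'' \in K_y$ for each $a$, so $q \in K_y$ by Lemma \ref{C-latt}, and $yq \cle y$ by Lemma \ref{list1}(f). To finish one needs the equality $xq = yq$. In the binary case this used Proposition \ref{list2}(h); here the analogous observation is simply that the set $\{e \in P \colo (x-y)e = 0\} = [0, (x-y)']$, being an initial segment of $P$, is automatically closed under arbitrary joins existing in $P$. From $(x-y)a'' = a - a = 0$ we conclude $a'' \le (x-y)'$ for every $a$, hence $q \le (x-y)'$, and therefore $(x-y)q = 0$. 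Thus $xq = yq \cle y$, as required.

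For the self-adjoint addendum, when $A \subseteq S$ and $x \in S$, Corollary \ref{cle!!}(c1,c2) yields $a''x = xa''$ for each $a \in A$, placing every $a''$ in $C(x)$; Lemma \ref{C-latt} then puts $p$ and $q$ in $C(x)$ as well. Since $x, p, q$ are all self-adjoint and $xp = px$, $xq = qx$, both $xp$ and $xq$ are self-adjoint.
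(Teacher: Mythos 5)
Your proof is correct and is precisely the elaboration of the paper's one-line indication that Theorem \ref{baer} ``is proved similarly'' to Theorem \ref{ab<x}: you transport $p$ and $q$ into $K_x$ via Lemma \ref{C-latt} (using completeness of $P$ in a Baer *-ring) and rerun the binary argument. You also correctly identify and repair the only step that does not transfer verbatim, namely replacing Proposition \ref{list2}(h) by the observation that the initial segment $[0,(x-y)']$ is closed under arbitrary existing joins, which gives $(x-y)q=0$.
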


Elements $a$ and $b$ of $R$ are said to be \emph{coherent} if
\[
a^*b = b^*a \mbox{ and } ab'' = ba'' \mbox{ (or, equivalently, } a''b^* = b''a^*).
\]
Theorem 9 of \cite{J2} presents two complicated conditions that are necessary and sufficient for elements $a,b \in R$ to have both the join and meet correlated by the identity $a \cvee b = a + b - a \cwedge b$. We now show that the much simpler coherence condition does the job, and even more. In connection with items (c) and (d) in the subsequent theorem, see also the equation (16) and Corollary 10 in \cite{J2}.

\begin{theorem}
The following conditions on elements $a,b \in R$ are equivalent:
\begin{enumerate}
\thitem{a} 
$a$ and $b$ are coherent,
\thitem{b}
$a$ and $b$ have an upper bound, $a'' \in K_b$ and $b'' \in K_a$,
\thitem{c}
$a \cvee b$ exists, and $a + ba'= a \cvee b = b + ab'$,
\thitem{d}
$a \cwedge b$ and $a \cvee b$ exist, and
\par
$a b'' = a \cwedge b = ba''\!$, 
\quad 
$a \cvee b = ab' + a \cwedge b + ba'\; (= a + b - a \cwedge b)$.
\end{enumerate}
\end{theorem}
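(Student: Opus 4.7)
The plan is to prove the cycle (a) $\Rightarrow$ (b) $\Rightarrow$ (c) $\Rightarrow$ (d) $\Rightarrow$ (a), using Theorem \ref{ab<x} to convert the existence of an upper bound into equational descriptions of meets and joins.

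For (a) $\Rightarrow$ (b), I take $c := a + ba'$ as a candidate common upper bound. The coherence identity $ab'' = ba''$ rewrites it as $c = a + b - ba'' = a + b - ab'' = b + ab'$. The *-orthogonality $ba' \perp a$ follows from $a'a^* = 0$ (P\ref{list2}(a)) and $a'b^*a = a'a^*b = 0$ (using $b^*a = a^*b$), hence $a \cle c$; symmetrically $b \cle c$. For $a'' \in K_b$, the key computation is that both $a''b^*b$ and $b^*ba''$ collapse to $a^*b$: on the one hand, $a''b^* = (ba'')^* = (ab'')^* = b''a^*$, so $a''b^*b = b''a^*b = b''b^*a = b^*a = a^*b$ via P\ref{list2}(b); on the other, $b^*ba'' = b^*\cdot ab'' = (b^*a)b'' = (a^*b)b'' = a^*b$. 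Symmetrically $b'' \in K_a$.

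For (b) $\Rightarrow$ (c), T\ref{cle!}(c,e) gives $a = xa''$, $b = xb''$, with $a''$ and $b''$ each commuting with $x^*x$. Substituting $b^*b = b''x^*xb''$ into $a''b^*b = b^*ba''$ and pushing $x^*x$ past the outer $b''$'s reduces this to $x^*x(a''b'' - b''a'') = 0$; propriety of the involution then yields $(xu)^*(xu) = u^*x^*xu = 0$ for $u := a''b'' - b''a''$, so $xu = 0$, whence $ab'' = xa''b'' = xb''a'' = ba''$. The calculation from the previous step now shows $c := a + ba' = b + ab'$ is a common upper bound, and T\ref{ab<x} gives $a \cvee b = c(a'' \vee b'')$. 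To see this equals $c$, note $a' \in K_b$ (equivalent to $a'' \in K_b$), so $ba' \cle b$ by L\ref{list1}(e), hence $(ba')'' \le b''$ by C\ref{''}. Since $a \perp ba'$, C\ref{cor<x}(a) gives $c'' = a'' \vee (ba')'' \le a'' \vee b''$, the reverse inequality being automatic; thus $c'' = a'' \vee b''$ and $c(a'' \vee b'') = cc'' = c$ by P\ref{list2}(b).

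For (c) $\Rightarrow$ (d), equating the two expressions for $a \cvee b$ yields $ab' - ba' = 0$, i.e.\ $m := ab'' = ba''$. This $m$ lies below $a$ and $b$ by T\ref{cle!}(d); for any lower bound $d$, T\ref{cle!}(c) and C\ref{''} give $d = ad''$ with $d'' \le a''\wedge b''$, so $md'' = ab''d'' = ad'' = d$, whence $d \cle m$ by T\ref{cle!}(d). The identity $a \cvee b = ab' + m + ba' = a + b - m$ is then immediate. Finally, for (d) $\Rightarrow$ (a): the identity $ab'' = ba''$ is one coherence identity; writing $y := a \cvee b$, T\ref{cle!}(c,e) yields $a^*b = y^*ya''b''$ and $b^*a = y^*yb''a''$, and from $y(a''b''-b''a'') = ab''-ba'' = 0$ one has $y^*y(a''b''-b''a'') = 0$, hence $a^*b = b^*a$. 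The main technical obstacle is the (b) $\Rightarrow$ (c) step: the projections $a''$ and $b''$ need not commute, but propriety of the involution does the decisive work, turning $x^*xu = 0$ into $xu = 0$ for the non-projection element $u = a''b''-b''a''$, which is exactly enough to force $ab'' = ba''$.
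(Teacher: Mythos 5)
Your cycle breaks at (c) $\Rightarrow$ (d). From $a + ba' = b + ab'$ you correctly extract $m := ab'' = ba''$ (though the intermediate claim ``$ab' - ba' = 0$'' is not what follows; equating the two expressions gives $a - ab' = b - ba'$, i.e.\ $ab'' = ba''$). But the assertion that ``$m$ lies below $a$ and $b$ by T\ref{cle!}(d)'' does not hold up: that criterion requires \emph{both} a left and a right projection multiplier, $fa = m = ae$, and the identities $m = ab''$, $m = ba''$ supply only right multipliers (one for $a$, one for $b$). A one-sided identity $m = ae$ yields only $mm^* = am^*$ (Lemma \ref{cle:l}(1)), i.e.\ half of $m \cle a$; the other half, $m^*m = m^*a$, amounts to $a''b^*ba'' = a''b^*a$, which is genuinely nontrivial and in effect requires the commutation data $a'' \in K_b$, $b'' \in K_a$ --- exactly what the paper's (c) $\Rightarrow$ (b) step extracts from $x := a \cvee b$ by a chain of computations with $x^*x$. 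So this step needs real additional work, not just a citation, and as written the implication chain is broken there.

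There is also a smaller soft spot in (b) $\Rightarrow$ (c): ``the calculation from the previous step'' that makes $c = a + ba'$ an upper bound uses the orthogonality $(ba')^*a = a'b^*a = a'a^*b = 0$, which invokes $a^*b = b^*a$; at that point you have only derived $ab'' = ba''$. The missing identity does follow from your own setup ($a^*b = a''x^*xb'' = x^*xa''b'' = x^*xb''a'' = b^*a$), but it has to be said. On the positive side, your (a) $\Rightarrow$ (b) computation coincides with the paper's; the propriety trick $(xu)^*(xu) = u^*x^*xu = 0 \Rightarrow xu = 0$ for $u = a''b'' - b''a''$ is correct and a genuinely different device from anything in the paper's proof; and your (d) $\Rightarrow$ (a) via $a^*b - b^*a = y^*y(a''b'' - b''a'') = 0$ is clean and correct.
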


\begin{proof}
(a) $\to$ (b),(c).
Suppose that $a^*b = b^*a$ and $ab'' = ba''$. Then $a \cle a + ba'$, for $a \perp ba'$: by virtue of P\ref{list2}(a), $a(ba')^* = aa'b^* = 0$ and $(ba')^*a = a'b^*a = a'a^*b = 0$. Likewise, $b \cle b + ab'$. But $a = ab'+ ab''$ and $b = ba'+ ba''$; so, $a + ba'= b + ab'$. Hence, $a + ba'$ is an upper bound of $a$ and $b$.

Next, $a''b^*b = b''a^*b = b''b^*a = b^*a \mbox{ (P\ref{list2}(b))} = a^*b = a^*bb'' = b^*ab'' = b^*ba''$. Therefore, $a'' \in K_b$; likewise, $b'' \in K_a$. So, (b) holds. Observe that, by L\ref{C-latt}, also $a'\in K_b$ and $b' \in K_a$.

Further, $a + ba'$ is even the least upper bound of $a$ an $b$. Suppose that $a,b \cle x$ for some $x$. As $ba' \cle b$ by L\ref{list1}(f), then $ba' \cle x$.  We already know that $a \perp ba'$; by L\ref{list1}(g), then  $a + ba' = a \cvee ba'\cle x$. Likewise, $b +ab'$ is the least upper bound of $a$ and $b$.  Therefore, (c) also holds.

(b) $\to$ (a).
Suppose that $a,b \cle x$, so that $a = xa''$ and $b = xb''$ by T\ref{cle!}(c). If also $a'' \in K_b$ and $b'' \in K_a$, then $a'' \wedge b'' \in K_b$ (L\ref{C-latt}, P\ref{list2}(e)) and $b(a'' \wedge b'') \cle ba''$ by L\ref{list1}(e). On the other hand, $ba'' \cle b(a'' \wedge b'')$. Indeed, $(ba'')'' \le a''$ (P\ref{list2}(f)), $ba'' \cle b$ (L\ref{list1}(f)) and $(ba'')'' \le b''$(C\ref{''}); so $(ba'')'' \le a'' \wedge b''$. Also, $ba'' = b(ba'')''$ and $(ba'')'' \in K_b$ (T\ref{cle!}(e)).
It follows that $b(ba'')''  \cle b(a'' \wedge b'')$ (L\ref{list1}(e)) and, consequently, $ba'' \cle b(a'' \wedge b'')$. Thus, $ba'' = b(a'' \wedge b'') = a \cwedge b$ (see C\ref{cor<x}(b)), and likewise $ab'' = a \cwedge b$. Therefore, $ab'' = ba''$. 
Furthermore, $a^* = (xa'')^* = a''x^*$ by T\ref{cle!}(c), and likewise $b^* = b''x^*$; so
\(
a^*b = a''x^*xb'' = x^*xa''b'' = x^*ab'' = x^*ba'' = x^*xb''a'' = b''x^*xa'' = b^*a\).

(c) $\to$ (b). Suppose that $ a + ba' = a \cvee b = b + ab'$. But $a = ab' + ab''$ and $b = ba'+ ba''$; it follows that $ab'' = ba''$. Now let $x := a \cvee b$; then $a'',b'' \in K_x$ by T\ref{cle!}(e) and, further, $a'' \in K_b$: $a''b^*b = a''(xb'')^*xb'' = a'' b''x^*xb'' = x^*xa''b'' = x^*ab''= x^*ba'' = x^*xb''a'' = b''x^*xb''a'' = (xb'')^*xb''a'' = b^*ba''$. Likewise, $b'' \in K_a$.  

(b),(c) $\to$ (d).
We saw in the proof (b)$\to$(a) that (b) implies the identity $ab'' = a \cwedge b = ba''$. Theorem \ref{ab<x}(a) further implies  the first identity in (d). Then $a = ab' + (a \cwedge b)$ and $b = ba'+ (a \cwedge b)$; together with (c), this gives us the other identity.

(d) $\to$ (c). Obviously. 
\end{proof}

\section{When is $R$ a lower semilattice?} \label{meets}

Standard order-theoretic considerations show the following consequence of T\ref{baer}(b).

\begin{corollary}[\protect{\cite[Theorem 7]{J2} } ]
In a Baer *-ring, every nonempty subset has the greatest lower bound. 
\end{corollary}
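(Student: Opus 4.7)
The plan is to reduce the statement to Theorem \ref{baer}(b), which already provides suprema for arbitrary nonempty subsets that happen to be bounded above. The key order-theoretic observation is that any poset with a least element, in which every bounded-above nonempty subset has a supremum, automatically has infima for \emph{all} nonempty subsets.

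Concretely, given a nonempty $A \subseteq R$, I would consider the set of its lower bounds,
\[
L(A) := \{\ell \in R \colo \ell \cle a \text{ for every } a \in A\}.
\]
By L\ref{list1}(a), the element $0$ belongs to $L(A)$, so $L(A)$ is nonempty. Moreover, every $a \in A$ is an upper bound for $L(A)$ by the very definition of $L(A)$, so $L(A)$ is bounded above. Since $R$ is a Baer *-ring, Theorem \ref{baer}(b) applies to $L(A)$ and yields a least upper bound $s := \bigvee L(A)$.

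It remains to check that $s$ is the greatest lower bound of $A$. Because each $a \in A$ is an upper bound for $L(A)$ and $s$ is the \emph{least} such upper bound, we get $s \cle a$ for every $a \in A$; that is, $s \in L(A)$. Being both a lower bound of $A$ and an upper bound of $L(A)$, the element $s$ is the greatest element of $L(A)$, hence the infimum of $A$.

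There is no real obstacle here: the argument is purely order-theoretic and uses nothing beyond the existence of a least element (L\ref{list1}(a)) and the supremum clause of Theorem \ref{baer}(b). The only thing to be mildly careful about is noting that $L(A)$ is nonempty (so that T\ref{baer}(b) may legitimately be invoked), which is precisely where the hypothesis that $R$ has a least element is used.
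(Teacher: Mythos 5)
Your argument is exactly the ``standard order-theoretic considerations'' the paper invokes: the supremum (via Theorem \ref{baer}(b)) of the nonempty, bounded-above set of lower bounds of $A$ is the infimum of $A$, with $0$ guaranteeing nonemptiness. The proposal is correct and matches the paper's intended proof.
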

  
In particular, any Baer *-ring is a lower semilattice. As $a \cwedge b$ is the least upper bound of all lower bounds of $A := \{a,b\}$, we conclude from T\ref{baer}(b) that
\[
a \cwedge b = a\bigvee(u''\colo u  \cle a, b) = b\bigvee(u''\colo u \cle a, b).
\]
We now adjust this result to an arbitrary Rickart *-ring $R$.

\begin{theorem}
Elements $a$ and $b$ of $R$ have the meet if and only if the set $\{u''\colo u \cle a,b\}$ has the greatest element $m$. If this is the case, then $m = (a \cwedge b)''$ and $a \cwedge b = am = bm$. 
\end{theorem}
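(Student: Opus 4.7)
The plan is to prove the two directions by using the projection-level translation of the star order supplied by Theorem \ref{cle!} and Corollary \ref{''}, together with the monotonicity principle of Lemma \ref{list1}(e).

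For the forward direction, assume $w := a \cwedge b$ exists. Then $w \cle a$ and $w \cle b$, so $w''$ belongs to $M := \{u''\colo u \cle a,b\}$. For any other $u \in R$ with $u \cle a,b$, the definition of meet gives $u \cle w$, whence $u'' \le w''$ by Corollary \ref{''}. Thus $w''$ is the greatest element of $M$, giving $m = (a \cwedge b)''$. Finally, $w \cle a$ together with Theorem \ref{cle!}(c) yields $w = aw'' = am$, and similarly $w = bm$.

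For the backward direction, assume $M$ has a greatest element $m$. Pick some $u \cle a,b$ witnessing $u'' = m$; by Theorem \ref{cle!}(c) applied to $u \cle a$ and $u \cle b$ we obtain $u = au'' = am$ and $u = bu'' = bm$, so in particular $am = bm$. It remains to show this common value $u$ is actually the greatest lower bound of $\{a,b\}$. It is a lower bound by construction. For an arbitrary lower bound $v \cle a,b$ the maximality of $m$ gives $v'' \le m$; moreover $v \cle a$ forces $v'' \in K_a$ by Theorem \ref{cle!}(e). Lemma \ref{list1}(e), applied with $x := a$, $e := v''$ and $f := m$, then yields $av'' \cle am$. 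Since $v = av''$ (Theorem \ref{cle!}(c) again), this is precisely $v \cle u$, as required. Hence $u = a \cwedge b$, and the identities $m = u'' = (a \cwedge b)''$ and $a \cwedge b = am = bm$ fall out.

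No step looks genuinely obstructive: the whole argument is a routine translation between star-order statements and projection-level inequalities via $''$, enabled by the commutativity condition $v'' \in K_a$ that Theorem \ref{cle!}(e) automatically supplies for any $v \cle a$. The only mild subtlety is remembering to invoke this commutativity in order to apply Lemma \ref{list1}(e) in the final step of the backward direction; without it one would only have a projection inequality $v'' \le m$ and no way to lift it to a star-order inequality between $v = av''$ and $am$.
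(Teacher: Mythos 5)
Your proposal is correct and follows essentially the same route as the paper: the forward direction via Corollary \ref{''} and Theorem \ref{cle!}(c), and the backward direction by lifting the projection inequality $v'' \le m$ to $v = av'' \cle am$ using $v'' \in K_a$ (Theorem \ref{cle!}(e)) and Lemma \ref{list1}(e). The only cosmetic difference is that the paper runs the final step through $b$ rather than $a$.
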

\begin{proof}
If $a \cwedge b$ exists, then $a \cwedge b = a(a \cwedge b)''$ by T\ref{cle!}(c). Evidently, the element $(a \cwedge b)''$ belongs to $\{u''\colo u \cle a,b\}$. Moreover, it is the greatest element in this set: if $u \cle a,b$, then $u \cle a \cwedge b$ and $u'' \cle (a \cwedge b)''$ by C\ref{''}. 

Now suppose that the set $\{u''\colo u \cle a,b\}$ has the greatest element $u''_0$, where $u_0$ is a lower bound of $a$ and $b$. Then $bu''_0$ is the greatest lower bound of $a$ and $b$. Indeed, if $u \cle a,b$, then $u'' \in K_b$ (T\ref{cle!}(e)),  $u'' \cle u''_0$ and, further, $u = bu'' \cle bu''_0 = u_0$ (T\ref{cle!}(c) and L\ref{list1}(e)). Likewise, $au''_0$ also is such a lower bound.
\end{proof}

The involved set $\{u''\colo u \cle a,b\}$ admits a more detailed description. For any $a,b \in R$, let
\[
L_{a,b} :=  
\{e \in K_a \cap K_b\colo e \le a'' \wedge b'' \wedge (a - b)'\}.
\]
This definition is suggested by the proof of Proposition 3.2 in \cite{ACMS}. Cf.\ also the end of Section 4 in \cite{C} and the proof of \cite[Theorem 7]{J2}.

\begin{theorem}
$L_{a,b} = \{u''\colo u \cle a,b\}$.
\end{theorem}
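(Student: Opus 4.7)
The plan is to prove the two inclusions separately, using the equivalent characterizations of $\cle$ from Theorem \ref{cle!} together with the basic commutation/projection facts from Lemmas \ref{C-latt}, \ref{list1} and Proposition \ref{list2}.

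For the inclusion $\{u''\colo u \cle a,b\} \subseteq L_{a,b}$, I take any $u$ with $u \cle a$ and $u \cle b$. Theorem \ref{cle!}(e) gives $u'' \in K_a \cap K_b$, and Corollary \ref{''} gives $u'' \le a''$ and $u'' \le b''$, hence $u'' \le a'' \wedge b''$. Theorem \ref{cle!}(c) provides the identities $au'' = u = bu''$, so $(a-b)u'' = 0$; since $u''$ is a projection, this exactly says $u'' \le (a-b)'$ (the right annihilator of $a-b$ in $P$ being the initial segment $[0,(a-b)']$, cf.\ P\ref{list2}(h)). Thus $u'' \in L_{a,b}$.

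For the reverse inclusion $L_{a,b} \subseteq \{u''\colo u \cle a,b\}$, the natural candidate given $e \in L_{a,b}$ is the element $u := ae$. The three defining conditions of $L_{a,b}$ fit together neatly: $e \in K_a$ together with L\ref{list1}(f) yields $u = ae \cle a$; the inequality $e \le (a-b)'$ yields $(a-b)e = 0$, i.e., $ae = be$, so $u = be$, and then $e \in K_b$ combined again with L\ref{list1}(f) gives $u \cle b$; finally, $e \le a''$ combined with P\ref{list2}(g) shows $u'' = (ae)'' = e$. Hence $e = u''$ with $u \cle a,b$, as required.

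I do not expect a genuine obstacle here: the proof is essentially a bookkeeping exercise once one spots the right witness $u = ae$ (equivalently $u = be$) and recognizes that the three clauses in the definition of $L_{a,b}$ correspond respectively to $u \cle a$, to the compatibility $ae = be$, and to the recovery $u'' = e$. The only mild subtlety is justifying the step ``$(a-b)u'' = 0 \Longrightarrow u'' \le (a-b)'$'' in the first direction, which follows directly from the defining property of the Rickart projection $(a-b)'$ applied to the projection $u''$.
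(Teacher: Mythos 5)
Your proof is correct and follows essentially the same route as the paper: the forward inclusion via Theorem \ref{cle!} and Corollary \ref{''}, and the reverse inclusion via the witness $u = ae = be$ together with Proposition \ref{list2}(g). The only cosmetic difference is that you cite L\ref{list1}(f) to get $ae \cle a$ where the paper combines L\ref{list1}(e) with P\ref{list2}(b); both are valid.
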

\begin{proof}
If $u \cle a,b$, then, by T\ref{cle!}(e), $au'' = u = bu''$ and $u'' \in K_a \cap K_b$. Hence,  $(a - b)u'' = 0$, i.e., $u'' \le (a - b)'$. By C\ref{''}, $u'' \le a'' \wedge b''$; therefore, $u'' \in L_{a,b}$.

Conversely, if $e \in L_{a,b}$, then $ae \cle aa'' = a$  by Lemma \ref{list1}(e) and Proposition \ref{list2}(b), and likewise $be \cle b$. Also, $(a - b)e = 0$, whence $ae = be$. So, $ae$ is a lower bound of $a$ and $b$, and P\ref{list2}(g) implies that $e = (ae)'' \in\{u''\colo u \cle a,b\}$.
\end{proof}

We end the section with a characteristic of those Rickart *-rings which are lower semilattices with respect to the star order.

 \begin{lemma}	\label{meet2minus}
 If  $R$ is a lower semilattice under $\cle$, then the operation $\stminus$ defined by $ x \stminus y := x(x \cwedge y)' $ has the following properties:
\begin{enumerate}
\thitem{a}
$x - (x \stminus y) \cle y$,
\thitem{b}
if $x \cle y$, then $z \stminus y \cle z \stminus x$ for every $z$,
\thitem{c}
if $y \cle x$, then $x \stminus y = x - y$. \label{m/m}
\end{enumerate}
 \end{lemma}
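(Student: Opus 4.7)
The plan hinges on a single structural identity. Since $a' + a'' = 1$ for every $a \in R$, we can rewrite
\[
x \stminus y \,=\, x(x \cwedge y)' \,=\, x - x(x \cwedge y)''.
\]
Because $x \cwedge y \cle x$, part (c) of T\ref{cle!} gives $x(x \cwedge y)'' = x \cwedge y$, which produces the master identity
\[
x - (x \stminus y) \,=\, x \cwedge y.
\]
Part (a) is then immediate: the right-hand side is $\cle y$ by the very definition of the meet.

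For part (b), I would use that $\cwedge$ is monotone in each argument (valid in any lower semilattice), so that $x \cle y$ implies $z \cwedge x \cle z \cwedge y$; applying C\ref{''} converts this to the projection inequality $(z \cwedge y)' \le (z \cwedge x)'$. Next, since $z \cwedge y \cle z$, T\ref{cle!}(e) places $(z \cwedge y)''$ in $K_z$, and because $K_z$ is closed under orthocomplementation (L\ref{C-latt}), also $(z \cwedge y)' \in K_z$. Finally, L\ref{list1}(e), applied to $(z \cwedge y)' \le (z \cwedge x)'$ with $(z \cwedge y)' \in K_z$, gives $z(z \cwedge y)' \cle z(z \cwedge x)'$, which is precisely $z \stminus y \cle z \stminus x$.

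For part (c), the assumption $y \cle x$ forces $x \cwedge y = y$, so the master identity yields $x - (x \stminus y) = y$, whence $x \stminus y = x - y$.

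The argument has no genuine obstacle: each part is unlocked by the master identity and a citation of earlier results. The only easily-overlooked subtlety is invoking L\ref{C-latt} in part (b) to pass from $(z \cwedge y)'' \in K_z$ to $(z \cwedge y)' \in K_z$; without that closure remark, L\ref{list1}(e) would not apply.
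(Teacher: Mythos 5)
Your proof is correct and follows essentially the same route as the paper's: the identity $x - (x\stminus y) = x(x\cwedge y)'' = x\cwedge y$ via T\ref{cle!}(c) for parts (a) and (c), and for part (b) the chain monotonicity of $\cwedge$, C\ref{''}, membership of $(z\cwedge y)'$ in $K_z$ via T\ref{cle!}(e) and L\ref{C-latt}, and finally L\ref{list1}(e). The closure subtlety you flag is exactly the step the paper also makes explicit.
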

 \begin{proof}
Evidently,  $x - (x \stminus y) = x(1 - (x \cwedge y)') = x(x \cwedge y)'' = x \cwedge y \cle y$ (T\ref{cle!}(c)). Further, if $x \cle y$, then $z \cwedge x \cle z \cwedge y \cle z$ and $(z \cwedge y)' \cle (z \cwedge x)'$ (C\ref{''}). Moreover, $(z \cwedge x)'', (z \cwedge y)'' \in K_z$ (T\ref{cle!}(e)), whence, $(z \cwedge x)', (z \cwedge y)' \in K_z$ by L\ref{C-latt}. It follows by L\ref{list1}(e) that $z \stminus y = z(z \cwedge y)'\cle z(z \cwedge x)'= z \stminus x$. 
At last, if $y \cle x$, then $y = xy'' \mbox{ (T\ref{cle!}(c))} = x(x \cwedge y)'' = x(1 - (x \cwedge y)')$, whence $x - y = x(x \cwedge y)' = x \stminus y$.
\end{proof}

We call any binary operation $\stminus$ on $R$ satisfying the conditions (a)--(c) a \emph{star minus}.

\begin{theorem}	\label{^-}
$R$ is a lower semilattice under star order if and only if it admits a *-minus operation. If this is the case, then $(x \cwedge y) + (x \stminus y) = x$.
\end{theorem}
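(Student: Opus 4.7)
The $(\Rightarrow)$ direction is immediate from Lemma \ref{meet2minus}: when $R$ is a lower semilattice, the canonical operation $x \stminus y := x(x \cwedge y)'$ is a *-minus. For this operation, the added identity $(x \cwedge y) + (x \stminus y) = x$ follows from $x(x \cwedge y)'' = x \cwedge y$ (an instance of T\ref{cle!}(c), since $x \cwedge y \cle x$) combined with $(x \cwedge y)' + (x \cwedge y)'' = 1$.

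For the converse, suppose $\stminus$ satisfies (a)--(c) and fix $a, b \in R$. Motivated by property (a), I would set $c := a \stminus b$ and take $m := a - c$ as the candidate for $a \cwedge b$. Property (a) gives $m \cle b$ directly. For $m \cle a$, apply (b) to the universal relation $0 \cle b$ and then (c) to $0 \cle a$ to obtain $c \cle a \stminus 0 = a$; hence $c \perp a - c = m$, which is symmetric and yields $m \cle a$. Thus $m$ is a common lower bound of $a$ and $b$.

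To show $m$ is the greatest lower bound, let $\ell \cle a, b$. From (b) applied to $\ell \cle b$, $c \cle a \stminus \ell$, and from (c) applied to $\ell \cle a$ this simplifies to $c \cle a - \ell$. Lemma \ref{cle:l} then supplies projections $e,f$ with $c = (a - \ell)e = f(a - \ell)$; combining these with the star-order equations $\ell^*a = \ell^*\ell$ and $\ell a^* = \ell\ell^*$ (from $\ell \cle a$), one obtains $\ell^*c = \ell^*(a - \ell)e = 0$ and $\ell c^* = \ell(a - \ell)^*f = 0$. The direct computations
\[
\ell(m - \ell)^* = \ell a^* - \ell c^* - \ell\ell^* = 0, \qquad \ell^*(m - \ell) = \ell^*a - \ell^*c - \ell^*\ell = 0,
\]
then show $\ell \perp m - \ell$, i.e., $\ell \cle m$. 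Hence $m = a \cwedge b$, and the final identity $(x \cwedge y) + (x \stminus y) = x$ is immediate from the definition of $m$. The principal obstacle is extracting both $\ell^*c = 0$ and $\ell c^* = 0$ simultaneously: each follows from a different factorization in Lemma \ref{cle:l}, so both the left and the right projection forms of the star order are indispensable here.
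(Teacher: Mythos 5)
Your proof is correct, and the necessity direction (including the identity $(x\cwedge y)+(x\stminus y)=x$) matches the paper's. But your argument that $m:=a-(a\stminus b)$ is the \emph{greatest} lower bound takes a genuinely different route. The paper stays entirely at the level of the axioms (a)--(c): from $\ell\cle b$ it gets $a\stminus b\cle a\stminus\ell\cle a$, then applies the antitonicity (b) a \emph{second} time to obtain $a\stminus(a\stminus\ell)\cle a\stminus(a\stminus b)$, and uses (c) to rewrite both sides as $\ell\cle m$ --- no element-wise computation in the ring is needed beyond trivial cancellation. You instead stop after one application of (b) and (c), obtaining $c\cle a-\ell$, and then verify $\ell\perp m-\ell$ by hand: extracting the two factorizations $c=(a-\ell)e=f(a-\ell)$ from Lemma \ref{cle:l} to kill $\ell^*c$ and $\ell c^*$, and combining with the defining equations of $\ell\cle a$. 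Your computation is sound (and your observation that both the left and right projection forms are needed is accurate), but it leans on the ambient $*$-ring structure where the paper's argument is purely order-theoretic and would survive in any system with a subtraction satisfying (a)--(c); the paper's version is also shorter. Conversely, your route makes explicit \emph{why} the axioms suffice --- the swap from $c\cle a-\ell$ to $\ell\cle a-c$ is where the involution does its work --- which the paper's formal manipulation hides inside property (b). A small stylistic simplification: instead of invoking Lemma \ref{cle:l} for the projections $e,f$, you could cite T\ref{cle!}(c) directly, since $c=(a-\ell)c''=(c^*)''(a-\ell)$ gives $\ell^*c=0=\ell c^*$ just as quickly.
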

\begin{proof}
Necessity of the condition follows from the lemma. 
Now assume that $\stminus$ is a *-minus operation on $R$. 
Then, for every $z$, $x \stminus z \cle x \stminus 0 = x - 0 = x$. In particular, $m := x - (x \stminus y) = x \stminus (x \stminus y) \cle x$,  and $m$ is a lower bound of $x$ and $y$. We are going to show that it is the greatest lower bound. 

Let $u \cle x,y$. Then $x \stminus y \cle x \stminus u \cle x$ and, further,  $x \stminus (x \stminus u) \cle x \stminus (x \stminus y)$, i.e., $x - (x \stminus u) \cle m$. At the same time, $x - (x \stminus u) = x - (x - u) = u$; so $u \cle m$ and $m = x \cwedge y $.
\end{proof}

Therefore, if a *-minus operation on $R$ exists, then it is uniquely defined. We do not discuss properties of this operation in more detail here, and only mention without proof that the system ($R, \stminus,0)$ turns out to be a so called commutative weak BCK-algebra (some relations between such algebras and star-ordered self-adjoint Hilbert space operators were noticed in \cite{C}).

\section*{Acknowledgment}
This work has been supported by Latvian Science Council, Grant No.\ 271/2012.


\begin{thebibliography}{12}

\bibitem{ACMS}
J.~Antezana, C.~Cano, I.~Mosconi, D.~Stojanoff,  
\textit{A note on the star order in Hilbert spaces}, 
Linear Multilinear Algebra 58 (2010), pp.\ 1037--1051.

\bibitem{B}
S.K.~Berberian,             
\textit{Baer *-rings},
Springer-Verlag, Berlin-Heidelberg-New York, 2011.

\bibitem{Bo}
M.~Bohata,   
\textit{Star order on operator and function algebras}, 
Publ.\ Math.\ 79 (2011), pp.\ 211--229.

\bibitem{BM}
J.K.~Baksalary, S.K.~Mitra,
\textit{Left-star and right-star partial ordering},
Linear Algebra Appl.\ 149 (1991), pp.\ 73--89.

\bibitem{C}
J.~C\={\i}rulis,
\textit{Further remarks on an order of quantum observables},
Math.\ Slovaca (in print; a preprint available as arXiv:1301.0640).

\bibitem{D}
M.P.~Drazin, 
\textit{Natural structures on semigroups with involution},
Bull.\ Math.\ Amer.\ Math.\ Soc.\ 84 (1978), pp.\ 139--141.

\bibitem{DW}                             
Ch.~Deng, Sh.~Wang,
\textit{On some characterizations of the partial ordering for bounded operators},
Math.\ Inequalities and Applications 12 (2012), pp.\ 619--630.

\bibitem{DM}
G.~Dolinar, J.~Marovt, 
\textit{Star partial order on B(H)}, 
Linear Algebra Appl.\ 434 (2011), pp.\ 319--326.

\bibitem{F1}
D.J.~Foulis,
\textit{Baer *-semigroups},
Proc.\ Amer.\ Math.\ Soc.\ 11 (1960), pp.\ 648--654.

\bibitem{F2}
D.J.~Foulis,
\textit{Relative inverses in Baer *-semigroups},
Mich.\ Math.\ J.\ 10 (1963), pp.\ 65--84.

\bibitem{G}                              
S.~Gudder,            
\textit{An order for quantum observables}, 
Math.\ Slovaca 56 (2006), pp.\ 573--589.

\bibitem{H}
R.E.~Hartwig,  
\textit{Pseudo lattice properties of the star-orthogonal partial ordering for star-regular rings}, 
Proc.\ Amer.\ Math.\ Soc.\ 77 (1979), pp.\ 299--303.

\bibitem{HD}
R.E.~Hartwig, M.P.~Drazin,             
\textit{Lattice properties of the *-order for complex matrices}, 
J.\ Math.\ Anal.\ Appl.\ 86 (1982), pp.\ 359--378.

\bibitem{He}
R.M.~Hestenes,
\textit{Relative Hermitian matrices},
Pacific J.\ Math.\ 11 (1961), pp.\ 224--245.
                              
\bibitem{J1}
M.F.~Janowitz, 
\textit{A note on Rickart rings and semi-Boolean algebras}, 
Algebra Univers.\ 6 (1976), pp.\ 9--12.

\bibitem{J2}
M.F.~Janowitz,  
\textit{On the *-order for Rickart *-rings}, 
 Algebra Univers.\ 16 (1983), pp.\ 360--369.
 
\bibitem{LX}
Y.~Li, X.M.~Xu,
\textit{The logic order on a generalized Hermitian algebra},
Repts.\ Math.\ Phys.\ 69 (2012) pp.\ 371--381.
 
\bibitem{MBM}
S.K.~Mitra, P.~Bhimasanharam, S.B.~Malik,
\textit{Matrix Partial Orders, Shorted Operators and Applications},
World Scientific, Singapore, 2010.

\bibitem{PV}                              
S.~Pulmannov\'a, E.~Vincekov\'a,  
\textit{Remarks on the order for quantum observables}, 
Math.\ Slovaca 57 (2007), pp.\ 589--600.

\bibitem{XDKL}                              
X.-M.~Xu, H.-K.~Du, Hong-Ke; X.~Fang, Y.~Li,            
\textit{The supremum of linear operators for the *-order},
Linear Algebra Appl.\ 433 (2010), pp.\ 2198-2207.
                              
\end{thebibliography}
\end{document}